\documentclass[10pt,reqno]{amsart}

\usepackage[T1]{fontenc}
\usepackage{lmodern}
\usepackage{microtype}
\usepackage{mathtools,amssymb}
\usepackage{booktabs,tabularx,array}
\usepackage[table]{xcolor}
\usepackage{enumitem}
\usepackage[colorlinks=true,linkcolor=blue!50!black,citecolor=blue!50!black,
  urlcolor=blue!50!black,
  pdftitle={A minimal HRT counterexample at the sharp Gelfand--Shilov threshold},
  pdfauthor={Vignon Oussa}]{hyperref}
\usepackage[capitalize,noabbrev]{cleveref}

\allowdisplaybreaks
\numberwithin{equation}{section}
\newtheorem{theorem}{Theorem}[section]
\newtheorem{proposition}[theorem]{Proposition}
\newtheorem{lemma}[theorem]{Lemma}
\newtheorem{corollary}[theorem]{Corollary}
\theoremstyle{definition}
\newtheorem{remark}[theorem]{Remark}

\newcommand{\R}{\mathbb R}
\newcommand{\C}{\mathbb C}
\newcommand{\Q}{\mathbb Q}
\newcommand{\Z}{\mathbb Z}
\newcommand{\T}{\mathbb T}
\newcommand{\cS}{\mathcal S}
\newcommand{\cZ}{\mathcal Z}
\newcommand{\cZtwo}{\mathcal Z_2}
\newcommand{\ii}{\mathrm i}
\newcommand{\e}{\mathrm e}
\newcommand{\abs}[1]{\lvert#1\rvert}
\newcommand{\norm}[1]{\lVert#1\rVert}
\newcommand{\spn}{\operatorname{span}}

\definecolor{HRTHeader}{HTML}{DCE6F1}
\definecolor{HRTRegime}{HTML}{F2F2F2}
\definecolor{HRTPositive}{HTML}{E2F0D9}
\definecolor{HRTPartial}{HTML}{FFF2CC}
\definecolor{HRTOpen}{HTML}{E7E6E6}
\definecolor{HRTCounterexample}{HTML}{F4CCCC}
\definecolor{HRTFramework}{HTML}{D9EAF7}
\definecolor{HRTSharp}{HTML}{E4DFEC}

\title[Sharp minimal HRT counterexample]
  {A minimal HRT counterexample at the sharp Gelfand--Shilov threshold}
\author{Vignon Oussa}
\address{Department of Mathematics, Bridgewater State University,
Bridgewater, Massachusetts, USA}
\email{voussa@bridgew.edu}
\date{August 16, 2026}
\subjclass[2020]{Primary 42C15; Secondary 37D30, 46F05, 65G30}
\keywords{HRT conjecture, finite Gabor systems, vector Zak transform,
 dominated cocycle, Gelfand--Shilov spaces, validated numerics}

\begin{document}

\begin{abstract}
In a remarkable breakthrough, Faulhuber, Petersen, van Velthoven, and
Voigtlaender disproved the Heil--Ramanathan--Topiwala conjecture by constructing
twelve linearly dependent time--frequency shifts of a Schwartz function.  The
following day, the author compressed their vector-Zak and cohomological
architecture to an explicit intrinsically subcritical four-point configuration,
thereby attaining the minimum possible cardinality.  Soon thereafter, Jasper
and Mixon gave a transparent analytic refinement of the breakthrough mechanism
and proved, for every Diophantine pair, that HRT counterexamples may be realized
with windows in the Roumieu Gelfand--Shilov class $\cS_1^1(\R)$; their theorem
establishes the endpoint regularity phenomenon in broad arithmetic generality
while leaving support cardinality unrestricted.  The purpose of the present
paper is to show that these two sharp advances can be attained simultaneously.
We prove that the same explicit four points, coefficients, and matrix cocycle
admit a nonzero window in $\cS_1^1(\R)$.  Moreover, if
$0\ne f\in\cS_s^\sigma(\R)$ and $s<1$ or $\sigma<1$, then every finite system
of distinct time--frequency shifts of $f$ is linearly independent.  Thus four
is the least possible cardinality and $(1,1)$ is the sharp coordinatewise
threshold within the Roumieu Gelfand--Shilov scale.
\end{abstract}

\maketitle

\section{Statement of the sharp result}

For $z=(x,\omega)\in\R^2$, let
\begin{equation}\label{eq:weyl}
 (\rho(x,\omega)f)(t)=\e^{2\pi\ii\omega(t-x/2)}f(t-x)
\end{equation}
be the Weyl shift on $L^2(\R)$.  For $s,\sigma>0$, the Roumieu class
$\cS_s^\sigma(\R)$ consists of the functions $f\in C^\infty(\R)$ for which
there are $C,h>0$ satisfying
\begin{equation}\label{eq:GSdef}
 \sup_{t\in\R}\abs{t^m f^{(n)}(t)}
 \le Ch^{m+n}m!^s n!^\sigma,
 \qquad m,n\ge0.
\end{equation}
Jasper and Mixon recently gave a clear strip-holomorphic treatment of the
breakthrough mechanism and proved that, for every Diophantine pair, one can
construct a finite HRT dependence with a nonzero window in $\cS_1^1(\R)$
\cite[Theorem~2]{JasperMixon2026}.  Their theorem is broad in arithmetic scope
and addresses endpoint regularity rather than support minimization.  The
theorem below is complementary and support-sharp: it places an $\cS_1^1$
window on the explicit four-point configuration from
\cite{OussaCounterexample2026}, without changing its coefficients or matrix
cocycle.

Set
\begin{equation}\label{eq:parameters}
 \vartheta=\sqrt[3]{2},\qquad
 \alpha=\vartheta-1,\qquad
 \beta=\vartheta^2-1,\qquad
 \zeta=(\alpha,\beta/2).
\end{equation}

\begin{theorem}[Sharp four-point counterexample]\label{thm:main}
There exist $0\ne g\in\cS_1^1(\R)$ and $\lambda\in\C^*$ such that
\begin{equation}\label{eq:eigen}
 \left[I+\frac35\rho(1,0)+\frac35\rho(0,1/2)\right]\rho(\zeta)g
 =\lambda g.
\end{equation}
Equivalently,
\begin{align}\label{eq:dependence}
 -\lambda g+\rho(\alpha,\beta/2)g
 &+\frac35\e^{-\pi\ii\beta/2}\rho(1+\alpha,\beta/2)g \notag\\
 &+\frac35\e^{\pi\ii\alpha/2}
 \rho\bigl(\alpha,(1+\beta)/2\bigr)g=0.
\end{align}
The four points are pairwise distinct, no three are collinear, and the four
absolute symplectic triangle determinants are
\begin{equation}\label{eq:determinants}
 \frac{\beta}{2},\qquad \frac{\alpha}{2},\qquad
 \frac{1+\alpha+\beta}{2},\qquad \frac12,
\end{equation}
all strictly between $0$ and $1$.
\end{theorem}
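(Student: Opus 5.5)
The plan is to follow the vector-Zak reduction of \cite{OussaCounterexample2026} and to upgrade every object in it from smooth to analytic on a complex strip. The analytic-to-$\cS_1^1$ dictionary would come from Jasper--Mixon \cite{JasperMixon2026}.

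First, the lattice $\Z\times\tfrac12\Z$ generated by the shifts $(1,0)$ and $(0,1/2)$ has density $2$. So the associated vector Zak transform $Z$ is unitary from $L^2(\R)$ onto $L^2$ sections over $\T^2$ with values in $\C^2$. It intertwines $\rho(1,0)$ and $\rho(0,1/2)$ with multiplication by explicit $2\times2$ matrix functions. It intertwines $\rho(\zeta)$ with a translation of $\T^2$ by a vector built from $(\alpha,\beta)$, composed with a unitary matrix factor. Hence \eqref{eq:eigen} becomes a twisted eigen-equation
\[
 A(\theta)\,G(\theta+\gamma)=\lambda\, G(\theta),\qquad \theta\in\T^2 ,
\]
for a trigonometric-polynomial cocycle $A$ over an irrational rotation $\gamma$. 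Here $G=Zg$ must satisfy the quasi-periodicity built into $Z$. Since $1,\vartheta,\vartheta^2$ are a $\Q$-basis of the cubic field, Schmidt's theorem makes $\gamma$ Diophantine.

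Second, I will reuse the cone estimate from \cite{OussaCounterexample2026}, which shows that $A$ has a dominated splitting on the real torus. I then extend that estimate to the complex strip $\{\abs{\operatorname{Im}\theta}<\delta\}$. Concretely, I fix a cone field of fixed aperture around the unperturbed dominant direction. By interval arithmetic on the explicit entries, with coefficients $\tfrac35$, I check that $A(\theta)$ maps the cone strictly into itself, with a uniform expansion gap, for all complex $\theta$ in the strip. Dominated splittings persist under perturbation, and the holomorphic graph-transform contraction then yields an invariant line field $E(\theta)$ that is holomorphic on the strip. Writing $E(\theta)=\C\, v(\theta)$ with $v$ holomorphic and having the right quasi-periodicity, the equation reduces to a scalar twisted cohomological equation
\[
 a(\theta)\,u(\theta+\gamma)=\lambda\, u(\theta).
\]
I solve it by taking logarithms, $\log u(\theta)-\log u(\theta+\gamma)=\log a(\theta)-\log\lambda$, with $\log\lambda$ the mean of $\log a$. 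Because $\gamma$ is Diophantine, the small divisors cost only a polynomial factor. That factor is absorbed by the exponential decay of the Fourier coefficients of $\log a$ on the strip, so $u$ is holomorphic on a slightly smaller strip.

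Third, I set $G=u\,v$ and $g=Z^{-1}G$. Nontriviality is automatic since $G\ne0$, and $\lambda\ne0$ since $\lambda=\exp(\text{mean of }\log a)$. Holomorphy of the Zak sections on a strip in both variables, with the quasi-periodicity, gives $g\in\cS_1^1(\R)$ via the characterization in \cite{JasperMixon2026}. Expanding the operator using $\rho(z)\rho(w)=\e^{\pi\ii(z\wedge w)}\rho(z+w)$ gives \eqref{eq:dependence}. The geometric claims, namely distinctness, non-collinearity and the determinants \eqref{eq:determinants}, are direct $2\times2$ computations using $0<\alpha<\beta<1$ (for instance, $\beta=\vartheta^2-1\approx0.587$).

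The main obstacle is making the splitting and the logarithms global. The line bundle $E$ and the multiplier $a$ must have zero winding along both generators of the torus, so that $\log a$ is single-valued and $v$ can be chosen compatibly with the Zak quasi-periodicity. I would first try to import the winding and index computation of \cite{OussaCounterexample2026} on the real torus, and argue by homotopy invariance that it persists on the strip. If that fails, I would adjust by a character $\e^{2\pi\ii k\cdot\theta}$, which shifts the winding and can be absorbed into the normalization of $\lambda$ and $\zeta$. The other delicate point is choosing $\delta$ small enough that the complex cone estimate still closes.
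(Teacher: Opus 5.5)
There is a genuine gap in your second step: it selects the wrong invariant line. As written, you take a cone field that $A(\theta)$ maps into itself, from the fiber over $\theta+\gamma$ into the fiber over $\theta$, with an expansion gap. That selects the \emph{dominating} line, the limit of forward products. The eigen-section must lie in the other line. In the paper, $E^s$ is the attractor of the \emph{inverse} graph transform built from $K_N(z)=B_N(R^Nz)^{-1}$. It is this line that carries the nowhere-zero sewn vector $v_{\rm sm}$ and the zero-winding multiplier $q_{\rm sm}$ of Theorem~\ref{thm:realinput}.

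This is forced by topology, not convention. Since $\det U_1(\omega)=-\e^{2\pi\ii\omega}$ and $\det U_2=-1$, the vector-Zak bundle has first Chern number $\pm1$. The continuous splitting into $E^s$ and $E^u$ then gives $c_1(E^u)=\pm1$, because $c_1(E^s)=0$. So the dominating line has no nowhere-vanishing sewn section. In fact it carries no nonzero continuous solution of $A(\theta)G(\theta+\gamma)=\lambda G(\theta)$ at all. Such a $G$ must vanish somewhere, and invertibility of $A$ propagates that zero along the translation orbit and, via sewing, to all integer translates. That set is dense because $1,\alpha,\beta$ are $\Q$-independent, so $G\equiv0$.

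Your fallback cannot repair this. Replacing $v$ by $\e^{2\pi\ii k\cdot\theta}v$ only multiplies the multiplier by the constant $\e^{2\pi\ii k\cdot\gamma}$, so it changes neither the winding nor the Chern class. Absorbing a character into $\zeta$ would change the four points that the theorem fixes. You need cones invariant under $A^{-1}$, that is, the recessive line.

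With that corrected, your outline runs parallel to the paper's, but two steps are thinner than they look.

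\textbf{Sewing-compatible holomorphic chart.} A fixed-aperture cone about a fixed direction is not compatible with the sewing, since $U_2=S$ swaps the coordinates. A holomorphic graph transform also needs a holomorphic, sewing-compatible affine chart. The paper approximates $v_{\rm sm}$ by an entire sewn section $\chi$ using Hermite sums (Lemma~\ref{lem:approx}). It then normalizes with $\ell(z)=\overline{\chi(\overline z)}^{T}$, which obeys the dual sewing law. The Banach fixed point is therefore directly a nowhere-zero holomorphic sewn section, with no separate trivialization argument needed.

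\textbf{Which contraction is certified.} The certified contraction is for sixteen inverse steps, not one forward step. The paper does not run a new complex interval-arithmetic check. It transfers the positive real margins to a thin complex tube by compactness and holomorphic continuity, then recovers one-step invariance by the identity theorem.

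Your homotopy argument for the windings is acceptable; the paper instead uses the coboundary identity $q_{\rm an}=q_{\rm sm}\,a(Tz)/a(z)$. Invoking Schmidt's theorem in place of the explicit bound \eqref{eq:small} is also acceptable.
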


\begin{corollary}[Two simultaneous sharp thresholds]\label{cor:sharp}
A finite linearly dependent Gabor system with a nonzero window in
$\cS_s^\sigma(\R)$ exists if and only if $s\ge1$ and $\sigma\ge1$.
Moreover, among all nonzero $L^2$ windows, four is the minimum cardinality of
a dependent finite Gabor system.
\end{corollary}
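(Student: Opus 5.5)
The corollary has three parts, and I would prove them separately: the existence direction, the non-existence direction when $s<1$ or $\sigma<1$, and the minimality of four.

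\emph{Existence for $s\ge1$, $\sigma\ge1$.} The Roumieu classes are monotone: if $s\le s'$ and $\sigma\le\sigma'$, then $\cS_s^\sigma(\R)\subseteq\cS_{s'}^{\sigma'}(\R)$. This is immediate from \eqref{eq:GSdef}, because $m!^s\le m!^{s'}$ and $n!^\sigma\le n!^{\sigma'}$. So the window $g\in\cS_1^1(\R)$ supplied by \cref{thm:main} lies in every $\cS_s^\sigma(\R)$ with $s,\sigma\ge1$. Relation \eqref{eq:dependence} is a nontrivial linear relation among four time--frequency shifts of $g$, and the theorem states that these four points are pairwise distinct. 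Hence $g$ gives a finite linearly dependent Gabor system in each such class.

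\emph{Non-existence when $s<1$ or $\sigma<1$.} This is the substantive half. I would reduce to the case $\sigma<1$ by the Fourier transform. $\mathcal F$ maps $\cS_s^\sigma$ onto $\cS_\sigma^s$, and it intertwines $\rho(x,\omega)$ with $\rho(\omega,-x)$ up to a unimodular factor, so linear independence of distinct shifts is preserved. Now let $f\in\cS_s^\sigma$ with $\sigma<1$. Then $f$ extends to an entire function of order at most $1/(1-\sigma)$ and finite type. Suppose $\sum_k c_k\e^{2\pi\ii\omega_k t}f(t-x_k)=0$ with distinct points $(x_k,\omega_k)$.

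Two reductions come first. If all the $x_k$ coincide, the relation reads $P\cdot f=0$ for a nonzero trigonometric polynomial $P$, which is impossible since $f\ne0$ is entire. If all the $\omega_k$ coincide, then taking the Fourier transform gives the same situation with a nonzero trigonometric polynomial multiplying $\hat f$, and $\hat f\ne0$ is continuous.

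For the general case, group the terms by distinct translations. The relation becomes $\sum_j P_j(t)f(t-x_j)=0$, where the $P_j$ are exponential polynomials. I would then use the decay of $f$ on the real line, namely $\abs{f(t)}\lesssim \e^{-c\abs{t}^{1/s}}$, together with the growth bounds of $f$ off the real axis, to isolate one extreme translate. The tool I would try first is Linnell's and Bownik's style of argument, or more concretely a Phragmén--Lindelöf/Denjoy--Carleman quasianalyticity argument: $\sigma<1$ puts $f$ in a quasianalytic class, and this is what should rule out the cancellation.

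I expect this step to be the main obstacle. If the paper already cites a known theorem giving HRT for $\cS^\sigma_s$ with $\sigma<1$ (for example, via the known result for entire windows of order $<2$ combined with the decay estimates), I would simply invoke it rather than reprove it.

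\emph{Minimality of four.} I would invoke the known theorem that every set of at most three distinct time--frequency shifts of a nonzero $L^2$ function is linearly independent (Heil--Ramanathan--Topiwala for collinear configurations and Demeter--Zaharescu for three points), and combine it with the four-point dependence \eqref{eq:dependence} of \cref{thm:main}.
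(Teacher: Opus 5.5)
\textbf{There is a genuine gap: the non-existence direction is not proved.} Your existence direction and your minimality argument match the paper. Existence uses the monotonicity of the Roumieu scale, applied to the $\cS_1^1$ window of \cref{thm:main}. Minimality uses independence of at most three points together with \cref{thm:main}. One citation correction: independence of every set of at most three distinct shifts is already in the original Heil--Ramanathan--Topiwala paper. Demeter and Zaharescu treated special four-point configurations.

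You leave the case $s<1$ or $\sigma<1$ unproved, and the mechanism you suggest cannot work. Quasianalyticity is not the obstruction. Every $f\in\cS_1^1(\R)$ satisfies $\abs{f^{(n)}}\le Ch^nn!$, so it is real-analytic and lies in the quasianalytic Denjoy--Carleman class $M_n=n!$. Yet \cref{thm:main} gives a dependent four-point system with exactly such a window. Any argument resting on quasianalyticity would therefore contradict \cref{thm:main}.

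Your fallback also falls short. The bound $\abs{f^{(n)}}\le Ch^nn!^{\sigma}$ gives an entire extension of order at most $1/(1-\sigma)$, which is $\ge2$ once $\sigma\ge1/2$. So an ``order $<2$'' result would miss the whole range $1/2\le\sigma<1$ next to the threshold. Moreover, when $\sigma<1\le s$, the real-line decay $\e^{-c\abs t^{1/s}}$ is at most exponential, too weak to isolate an extreme translate. Your two preliminary reductions (all $x_k$ equal, all $\omega_k$ equal) are correct but do not reach the general case.

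\textbf{The missing idea is to use decay rather than smoothness, and to reduce in the opposite direction.} The Gelfand--Shilov characterization gives $\abs{f(x)}\le C\e^{-a\abs x^{1/s}}$ and $\abs{\widehat f(\xi)}\le C\e^{-b\abs\xi^{1/\sigma}}$.
\begin{itemize}
\item If $s<1$, then $1/s>1$, so $-ax^{1/s}+cx\log x\to-\infty$. Hence $\abs{f(x)}\e^{cx\log x}\to0$ as $x\to+\infty$ for every $c>0$. This is exactly the hypothesis of the one-sided theorem of Bownik and Speegle, which gives independence of every finite set of distinct Weyl shifts.
\item If $\sigma<1$, apply the same argument to $\widehat f\in\cS_\sigma^s$, using $\mathcal F\rho(x,\omega)\mathcal F^{-1}=\rho(\omega,-x)$.
\end{itemize}
So your Fourier reduction should send the problem to the case $s<1$, super-exponential decay of the window, where a known theorem applies directly. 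Sending it to $\sigma<1$, entire extension, lands you where no such theorem is available.
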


The assertion is Roumieu.  We do not claim the Beurling endpoint
$\Sigma_1^1$, which would require the estimates in \eqref{eq:GSdef} for every
$h>0$.

\section{Chronology and the two-stage trichotomy}

Every noncollinear four-point set admits a marked presentation
\begin{equation}\label{eq:marked}
 \Lambda=\{0,u,v,q\},\qquad q=au+bv,\qquad [u,v]\ne0.
\end{equation}
The first classification stage is geometric,
\begin{equation}\label{eq:delta}
 \delta=\abs{[u,v]}:\qquad \delta>1,\quad \delta=1,\quad \delta<1,
\end{equation}
called supercritical, critical, and subcritical.  The second is arithmetic,
\begin{equation}\label{eq:rank}
 \varrho=\dim_{\Q}\spn_{\Q}\{1,a,b\}\in\{1,2,3\},
\end{equation}
corresponding to rational, mixed, and maximally irrational rogue coordinates.
The marking matters: the four possible triangle covolumes are
$\delta$, $\abs a\delta$, $\abs b\delta$, and
$\abs{1-a-b}\delta$.  Rational rank is unchanged by every admissible re-marking: for example, if
$b\ne0$, the basis $(u,q)$ gives the old point $v$ the coordinates
$(-a/b,1/b)$, and multiplication by $b$ identifies its $\Q$-span with
$\spn_{\Q}\{1,a,b\}$; the other re-markings are analogous.  The $(1,3)$
and $(2,2)$ labels are a separate affine-geometric classification, not
rational-rank labels.

\begin{table}[t]
\caption{Milestones leading to the sharp four-point result.  The online
research record \cite{OussaResearchRecord2026} supplies source links; the
mathematical claims are attributed to the cited papers and preprints.}
\label{tab:timeline}
\scriptsize
\setlength{\tabcolsep}{3.5pt}
\renewcommand{\arraystretch}{1.08}
\begin{tabularx}{\textwidth}{@{}>{\raggedright\arraybackslash}p{0.105\textwidth}
 >{\raggedright\arraybackslash}p{0.245\textwidth}
 >{\raggedright\arraybackslash}X@{}}
\toprule
Date & Result & Logical role\\
\midrule
June 6, 2024 & \cellcolor{HRTFramework}\textbf{ICERM framework}
\cite{OussaICERM2024} & Zak zeros and rational dimension are presented as
organizing invariants; this is the public starting point of the later
configuration program.\\
June 2025 & \cellcolor{HRTPartial}\textbf{One-rogue partial theorem}
\cite{OkoudjouOussa2025} & For an integer-lattice block plus one rogue point,
rank three is excluded for $W_0$/Schwartz windows; rank two is reduced to a
nonfinite Zak zero set.\\
Mar.--May 2026 & \cellcolor{HRTPartial}\textbf{Orbit rigidity}
\cite{OussaTrichotomy2026,OussaInfiniteNonDense2026} & The finite and dense
orbit branches are excluded, and the infinite proper branch is constrained
and then closed in the stated mixed-integer Schwartz scope.\\
April 23, 2026 & \cellcolor{HRTPositive}\textbf{Positive cells}
\cite{OussaFourPoint2026} & All rational-coordinate cases and the full
supercritical rank-three four-point cell are proved for every nonzero
$L^2$ window.\\
June 27, 2026 & \cellcolor{HRTPositive}\textbf{Critical rank two}
\cite{OussaCriticalMixed2026} & The complete marked cell
$(\delta,\varrho)=(1,2)$ is proved for every nonzero $L^2$ window.\\
July 18, 2026 & \cellcolor{HRTPositive}\textbf{Critical rank three}
\cite{OussaCriticalRankThree2026} & The complete marked cell
$(\delta,\varrho)=(1,3)$ is proved for every nonzero $L^2$ window.\\
August 5, 2026 & \cellcolor{HRTCounterexample}\textbf{HRT disproved}
\cite{FPPV2026} & The twelve-point Schwartz counterexample introduces the
vector-Zak, invariant-line, winding, and cohomological mechanism.\\
August 6, 2026 & \cellcolor{HRTCounterexample}\textbf{Four-point example}
\cite{OussaCounterexample2026} & The mechanism is compressed to the minimum
cardinality and placed intrinsically in the subcritical rank-three cell.\\
August 13, 2026 & \cellcolor{HRTSharp}\textbf{Endpoint regularity}
\cite{JasperMixon2026} & Jasper and Mixon give a transparent strip-holomorphic
construction of an $\cS_1^1$ counterexample for every Diophantine pair,
establishing the endpoint phenomenon in broad arithmetic generality while
leaving support cardinality unrestricted.\\
August 16, 2026 & \cellcolor{HRTSharp}\textbf{Simultaneous sharp endpoint} &
The present refinement places an $\cS_1^1$ window on the explicit four-point
configuration, attaining the regularity and cardinality endpoints
simultaneously.\\
\bottomrule
\end{tabularx}
\end{table}

To read the status table, fix one marked cell and consider the universal
assertion that every configuration in that cell is HRT-independent for every
nonzero $L^2$ window.  Green means that this assertion is proved; amber means
that it remains open but substantial subclasses or obstructions are known;
gray means open without comparable cell-level structure; rose means that the
universal assertion is false because a counterexample exists.  Rose does not
mean that all dependent configurations in the cell are classified.

\begin{center}
\scriptsize
\colorbox{HRTPositive}{\strut\textbf{Established positive}}\quad
\colorbox{HRTPartial}{\strut\textbf{Open with partial results}}\quad
\colorbox{HRTOpen}{\strut\textbf{Open}}\quad
\colorbox{HRTCounterexample}{\strut\textbf{Counterexample exists}}
\end{center}

\begin{table}[t]
\caption{Status as of August 16, 2026 of the universal four-point assertion
in the marked covolume--rank classification.  Results announced in preprints
are identified by citation.}
\label{tab:cells}
\scriptsize
\setlength{\tabcolsep}{3pt}
\renewcommand{\arraystretch}{1.14}
\begin{tabularx}{\textwidth}{@{}>{\raggedright\arraybackslash}p{0.14\textwidth}
 >{\raggedright\arraybackslash}X>{\raggedright\arraybackslash}X
 >{\raggedright\arraybackslash}X@{}}
\toprule
\rowcolor{HRTHeader}
Geometric stage & $\varrho=1$ rational & $\varrho=2$ mixed &
$\varrho=3$ maximally irrational\\
\midrule
\cellcolor{HRTRegime}\textbf{$\delta>1$}\newline Supercritical &
\cellcolor{HRTPositive}\textbf{Established.} Rational refinement and
Linnell \cite{Linnell,OussaFourPoint2026}. &
\cellcolor{HRTPartial}\textbf{Partially open.} Strong cyclicity,
orbit-closure, and recurrence subclasses are known, but no full arbitrary
$L^2$ cell theorem. &
\cellcolor{HRTPositive}\textbf{Established.} Full cell for every nonzero
$L^2$ window \cite{OussaFourPoint2026}.\\
\addlinespace
\cellcolor{HRTRegime}\textbf{$\delta=1$}\newline Critical &
\cellcolor{HRTPositive}\textbf{Established.} Rational refinement and
Linnell. &
\cellcolor{HRTPositive}\textbf{Established.} Full cell
\cite{OussaCriticalMixed2026}. &
\cellcolor{HRTPositive}\textbf{Established.} Full cell
\cite{OussaCriticalRankThree2026}.\\
\addlinespace
\cellcolor{HRTRegime}\textbf{$\delta<1$}\newline Subcritical &
\cellcolor{HRTPositive}\textbf{Established.} Rational refinement and
Linnell. &
\cellcolor{HRTPartial}\textbf{Partially open.} Existing spectral,
cohomological, orbit, and fixed-window criteria do not settle the universal
cell. &
\cellcolor{HRTCounterexample}\textbf{Universal assertion false.}  The
intrinsically subcritical four-point example lies here
\cite{OussaCounterexample2026}; full classification remains open.\\
\bottomrule
\end{tabularx}
\end{table}

No cell is presently assigned the gray status.  The rank-three covolume
trichotomy is especially transparent: the universal positive assertion holds
for $\delta\ge1$ and fails for $\delta<1$.  The four values in
\eqref{eq:determinants} show that the example cannot be moved into a positive
row by changing the reference triangle.  The endpoint refinement proved below
is transverse to this table: it sharpens the window while leaving the marked
cell unchanged.

\section{The real-torus input}

The four points in \eqref{eq:dependence} are
\begin{equation}\label{eq:points}
 p_0=(0,0),\quad p_1=(\alpha,\beta/2),\quad
 p_2=(1+\alpha,\beta/2),\quad
 p_3=(\alpha,(1+\beta)/2).
\end{equation}

\begin{lemma}[Intrinsic subcriticality]\label{lem:geometry}
The determinants of the four triangles in \eqref{eq:points} are those in
\eqref{eq:determinants}; moreover $1,\alpha,\beta$ are linearly independent
over $\Q$.
\end{lemma}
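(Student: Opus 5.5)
The proof has two independent parts: the four triangle determinants, by direct $2\times2$ computation, and the $\Q$-linear independence of $1,\alpha,\beta$, by reduction to the irreducibility of $X^3-2$.

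\emph{Determinants.} Use the symplectic form $[z,w]=x_1\omega_2-x_2\omega_1$ for $z=(x_1,\omega_1)$, $w=(x_2,\omega_2)$. The triangle on three points $p_i,p_j,p_k$ has determinant $[p_j-p_i,p_k-p_i]$. Taking differences from $p_1$ in \eqref{eq:points} gives
\[
 p_0-p_1=(-\alpha,-\beta/2),\qquad p_2-p_1=(1,0),\qquad p_3-p_1=(0,1/2).
\]
The four triangles then evaluate as follows.
\begin{itemize}
\item $\{p_1,p_2,p_3\}$: $[(1,0),(0,1/2)]=1/2$.
\item $\{p_1,p_0,p_2\}$: $[(-\alpha,-\beta/2),(1,0)]=\beta/2$.
\item $\{p_1,p_0,p_3\}$: $[(-\alpha,-\beta/2),(0,1/2)]=-\alpha/2$.
\item $\{p_0,p_2,p_3\}$: $[p_2,p_3]=(1+\alpha)(1+\beta)/2-\alpha\beta/2=(1+\alpha+\beta)/2$.
\end{itemize}
Taking absolute values and using $\alpha,\beta>0$ yields exactly the list \eqref{eq:determinants}. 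In the marked presentation \eqref{eq:marked} this is the consistency check $\delta=1/2$, $q=p_0-p_1=-\alpha u-2\beta\cdot(\ldots)$, i.e. $a=-\alpha$, $b=-\beta$ relative to $u=(1,0)$, $v=(0,1/2)$. The four covolumes are then $\delta$, $|a|\delta$, $|b|\delta$ and $|1-a-b|\delta$.

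\emph{Rational independence.} The only substantive input is that $1,\vartheta,\vartheta^2$ are $\Q$-linearly independent. This holds because $X^3-2$ is irreducible over $\Q$: by Eisenstein at $2$, or because it is a cubic with no rational root, the candidates $\pm1,\pm2$ all failing. Hence $[\Q(\vartheta):\Q]=3$ with basis $1,\vartheta,\vartheta^2$.

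Suppose $c_0+c_1\alpha+c_2\beta=0$ with $c_i\in\Q$. Substituting $\alpha=\vartheta-1$ and $\beta=\vartheta^2-1$ gives
\[
 (c_0-c_1-c_2)+c_1\vartheta+c_2\vartheta^2=0 .
\]
By the basis property, $c_1=c_2=0$, and then $c_0=0$. Equivalently, the change of basis from $(1,\vartheta,\vartheta^2)$ to $(1,\alpha,\beta)$ is unitriangular, so $\spn_\Q\{1,\alpha,\beta\}=\Q(\vartheta)$ has dimension $3$ and $\varrho=3$.

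\emph{Remaining check.} The statement also requires all four determinants to lie strictly in $(0,1)$. Since $1<\vartheta<2^{1/2}$, we have $0<\alpha<1$ and $0<\beta<1$. The only value needing care is $(1+\alpha+\beta)/2=(\vartheta+\vartheta^2-1)/2$, which is less than $1$ exactly when $\vartheta^2+\vartheta<3$. This holds because $\vartheta\approx1.26$ gives $\vartheta^2+\vartheta\approx2.85$. A rigorous version is the bound $\vartheta<1.27$, which follows from $1.27^3=2.048>2$; then $\vartheta^2+\vartheta<1.6129+1.27<3$.

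Nothing in this lemma is a real obstacle. The only points needing care are the sign bookkeeping in the determinants and the numerical bound in the last step.
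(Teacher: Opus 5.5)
Your proof is correct and follows the paper's route. You expand the four $2\times2$ symplectic determinants directly, use elementary bounds on $\vartheta$ to place them in $(0,1)$ (you take $\vartheta<\sqrt2$ and $\vartheta<1.27$ where the paper takes $\vartheta<4/3$ and $\vartheta^2<5/3$), and invoke irreducibility of $X^3-2$ to exclude a rational relation among $1,\alpha,\beta$. The one blemish is the garbled aside $q=-\alpha u-2\beta\cdot(\ldots)$: it should read $q=-\alpha u-\beta v$, which matches your $a=-\alpha$, $b=-\beta$, and the argument does not depend on it.
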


\begin{proof}
Direct expansion gives, up to sign,
$\beta/2$, $\alpha/2$, $(1+\alpha+\beta)/2$, and $1/2$.  Since
$1<\vartheta<4/3$ and $1<\vartheta^2<5/3$, one has
$0<\alpha,\beta<1$ and $1+\alpha+\beta<2$.  A rational relation among
$1,\alpha,\beta$ would give a rational polynomial of degree at most two
vanishing at $\vartheta$, contrary to irreducibility of $X^3-2$.
\end{proof}

Define the scalar and two-component Zak transforms by
\begin{align}\label{eq:zak}
 (\cZ f)(x,\nu)&=\sum_{k\in\Z}f(x-k)\e^{2\pi\ii k\nu},\\
 (\cZtwo f)_{r+1}(x,\omega)&=2^{-1/2}(\cZ f)
 \left(x,\frac{\omega+r}{2}\right),\qquad r=0,1.\label{eq:vectorzak}
\end{align}
A vector-Zak section satisfies
\begin{equation}\label{eq:sewing}
 F(x+1,\omega)=U_1(\omega)F(x,\omega),\qquad
 F(x,\omega+1)=U_2F(x,\omega),
\end{equation}
where
\begin{equation}\label{eq:U}
 U_1(\omega)=\e^{\pi\ii\omega}D,\quad
 U_2=S,\quad
 D=\begin{pmatrix}1&0\\0&-1\end{pmatrix},\quad
 S=\begin{pmatrix}0&1\\1&0\end{pmatrix}.
\end{equation}
Let $\tau=(\alpha,\beta)$, $Tz=z-\tau$, $R=T^{-1}$, and
\begin{equation}\label{eq:AB}
 A(x,\omega)=I+\frac35\e^{-\pi\ii\omega}D
                 +\frac35\e^{\pi\ii x}S,
 \qquad
 B(x,\omega)=\eta(x)A(x,\omega),
\end{equation}
with $\eta(x)=\e^{\pi\ii\beta(x-\alpha/2)}$.  Direct Zak calculation gives
\begin{equation}\label{eq:conjugacy}
 \cZtwo\left[I+\frac35\rho(1,0)+\frac35\rho(0,1/2)\right]
 \rho(\zeta)\cZtwo^{-1}F(z)=B(z)F(Tz)
\end{equation}
and the twisted covariance
\begin{equation}\label{eq:covariance}
 B(z+e_j)=U_j(z)B(z)U_j(Tz)^{-1},\qquad j=1,2.
\end{equation}

The present note uses the following already certified real-torus output.  It
contains every computer-assisted premise used here; no new numerical
certificate is required.

\begin{theorem}[Real-torus package \cite{OussaCounterexample2026}]
\label{thm:realinput}
For the cocycle \eqref{eq:AB}:
\begin{enumerate}[label=\textup{(\roman*)},leftmargin=2em]
\item there is a smooth one-step invariant dominated line $E^s(z)$ and a
smooth nowhere-zero sewn vector $v_{\rm sm}(z)\in E^s(z)$;
\item there is a smooth periodic $q_{\rm sm}:\T^2\to\C^*$ such that
\begin{equation}\label{eq:realmult}
 B(z)v_{\rm sm}(Tz)=q_{\rm sm}(z)v_{\rm sm}(z),
\end{equation}
and both winding numbers of $q_{\rm sm}$ vanish;
\item the inverse sixteen-step projective graph transform preserves a uniform
closed tube about $E^s$ and contracts its projective slope metric by a factor
$\theta<1/50$;
\item for every $(m,n)\in\Z^2\setminus\{0\}$,
\begin{equation}\label{eq:small}
 \abs{1-\e^{-2\pi\ii(m\alpha+n\beta)}}
 \ge \frac{1}{3(1+\abs m+\abs n)^2}.
\end{equation}
\end{enumerate}
\end{theorem}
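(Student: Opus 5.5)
The package in \cref{thm:realinput} combines one arithmetic statement, item (iv), with three dynamical statements, (i)--(iii), about the cocycle \eqref{eq:AB} over the irrational translation $T$ on $\T^2$. I would prove them separately.

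Item (iv) I would prove by hand with a norm argument in $\Z[\vartheta]$. For fixed $(m,n)\ne0$, let $k\in\Z$ be nearest to $m\alpha+n\beta$, and set
\[
\xi=m\alpha+n\beta-k=m\vartheta+n\vartheta^2-(m+n+k)\in\Z[\vartheta].
\]
By \cref{lem:geometry}, $\xi\ne0$, so its field norm is a nonzero rational integer and $\abs{\xi}\,\abs{\xi'}\,\abs{\xi''}\ge1$. Each conjugate replaces $\vartheta$ by $\vartheta\e^{\pm2\pi\ii/3}$. Since $\abs{k}\le\abs m+\abs n+1$, each conjugate satisfies
\[
\abs{\xi'}\le 2^{1/3}\abs m+2^{2/3}\abs n+\abs{m+n+k}\le c(1+\abs m+\abs n)
\]
for an explicit constant $c$. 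This gives $\norm{m\alpha+n\beta}_{\R/\Z}\ge c^{-2}(1+\abs m+\abs n)^{-2}$. Finally $\abs{1-\e^{2\pi\ii x}}\ge4\norm{x}_{\R/\Z}$. The only work is tracking the constants carefully enough to reach the factor $1/3$; a sharper bound on $\abs{m+n+k}$ using $\alpha+\beta<1$ should suffice.

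For (i) and (iii) I would use a cone-field (dominated splitting) criterion verified by interval arithmetic. I would pass to the inverse sixteen-step cocycle $B^{(-16)}$, so that the invariant object is the weak direction. Then I would cover $\T^2$ by a finite grid of rectangles. On each rectangle I would enclose, via rigorous Taylor or interval bounds on the entries of \eqref{eq:AB}, the image of a closed projective tube about a numerically computed approximate line, and check two things: the image lands inside the tube over the translated rectangle, and the projective slope metric is contracted by a factor below $1/50$. The graph transform is then a uniform contraction on continuous sections of the tube. Its fixed point is the invariant line $E^s$, and it is one-step invariant because it is unique.

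Smoothness follows from the $C^r$ section theorem. The base map is a translation, which is isometric, so a fiber contraction by $\theta<1$ dominates every power of the base Lipschitz constant, and the invariant section is $C^\infty$. The covariance \eqref{eq:covariance} shows that $E^s$ is compatible with the sewing \eqref{eq:sewing}. Hence $E^s$ defines a line subbundle of the twisted rank-two bundle determined by $U_1,U_2$.

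Producing a nowhere-zero sewn section $v_{\rm sm}$ requires this line bundle to be trivial, that is, its Chern number must vanish. I would compute this number from the validated enclosure of $E^s$, as the degree of a suitable normalized coordinate of the line along the boundary of the fundamental domain. It is an integer, so it is determined once the error is below $1/2$. With $v_{\rm sm}$ in hand, invariance of the line forces $B(z)v_{\rm sm}(Tz)$ to be a multiple of $v_{\rm sm}(z)$, which defines $q_{\rm sm}$ in \eqref{eq:realmult}. The multiple is nowhere zero because $B$ is invertible; I would confirm invertibility by an enclosure of $\det A$ bounded away from $0$. The quasi-periodicity of $v_{\rm sm}$ cancels against \eqref{eq:covariance}, so $q_{\rm sm}$ is genuinely periodic on $\T^2$. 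Its two winding numbers I would certify by the same integer-rounding device, evaluating $\arg q_{\rm sm}$ along the two generating circles.

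The main obstacle is the certified cone computation in (iii). The contraction must hold uniformly, with a verifiable margin, on every grid cell, and the sixteen-step products amplify interval widths. I would first search numerically for an accurate Fourier approximation of $E^s$ to center the tubes. I would then refine the grid adaptively near points where the domination is weakest, and bound products with interval arithmetic applied to the matrices themselves rather than entrywise, to control wrapping effects. Choosing the tube width is the delicate balance: it must be large enough that the image lands inside it, yet small enough that the slope contraction is uniform.
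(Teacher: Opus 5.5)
The paper does not prove this statement. It imports the whole package from \cite{OussaCounterexample2026} as already certified output, notes that it contains every computer-assisted premise, and says no new certificate is required; everything afterwards is analytic continuation of that real data. Your proposal is therefore a reconstruction of the cited source, not a variant of an argument in this paper.

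What your route buys is self-containedness where that is possible. Your norm argument proves (iv) by hand, and the constant closes once you use the refinement you anticipate. The crude bound $\abs{k}\le\abs m+\abs n+1$ only gives about $0.31$ in place of $1/3$. Writing $m+n+k=m\vartheta+n\vartheta^2-\xi$ gives $\abs{m+n+k}\le\vartheta\abs m+\vartheta^2\abs n+\frac12$, hence $\abs{\xi'}\le2\vartheta^2(1+\abs m+\abs n)$ and
\[
 \abs{1-\e^{-2\pi\ii(m\alpha+n\beta)}}\ge4\abs{\xi}\ge\frac{1}{\vartheta^4(1+\abs m+\abs n)^2}=\frac{1}{2\vartheta(1+\abs m+\abs n)^2},
\]
and $2\vartheta<3$.

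For (i)--(iii) your architecture is the right one: tube verification for the inverse sixteen-step transform, the $C^r$ section theorem over the isometric base, and integer rounding for the Chern number of $E^s$ and the windings of $q_{\rm sm}$. Those windings are intrinsic, since coboundaries $a(Tz)/a(z)$ have zero winding. But the plan describes a computation rather than performing one, and these facts cannot be expected to follow a priori. For instance, $\det U_1(\omega)=-\e^{2\pi\ii\omega}$ and $\det U_2=-1$ give the ambient rank-two bundle Chern number $\pm1$. So $E^s$ and a complementary line cannot both be trivial, and that $E^s$ is the trivial one is a property of this particular cocycle that only a certificate decides. Avoiding a new certificate is exactly what the citation buys the paper.

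Two local points need tightening.

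First, one-step invariance does not follow from uniqueness as you state it. The one-step transform of $E^s$ is again fixed by the sixteen-step transform, but uniqueness holds only among sections in the tube, and nothing you verify places that image there. There are two fixes.
\begin{enumerate}[label=\textup{(\alph*)},leftmargin=2em]
\item Also check that the seventeen-step transform maps the tube into itself. Then $\Gamma_{17}E^s$ is a sixteen-step fixed point in the tube, hence $\Gamma_1E^s=\Gamma_1\Gamma_{16}E^s=\Gamma_{17}E^s=E^s$.
\item Characterize $E^s(z)$ intrinsically as the limit of the dominant image direction of the $16k$-step inverse products. That direction is insensitive to one extra bounded invertible factor on either side.
\end{enumerate}
The same care applies to compatibility with the sewing: center the tube on a sewn section, so that the conjugated section stays in the tube.

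Second, invertibility of $B$ needs no enclosure. Since $\det A=1-\frac{9}{25}(\e^{-2\pi\ii\omega}+\e^{2\pi\ii x})$, you get $\abs{\det A}\ge7/25$ on $\R^2$, and $\abs{\eta}=1$ there.
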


\section{Holomorphic refinement}

For $\rho>0$ write
\begin{equation}\label{eq:strip}
 \Omega_\rho=\{(x,\omega)\in\C^2:
 \abs{\Im x}<\rho,\ \abs{\Im\omega}<\rho\}.
\end{equation}
The entries of $A$ and $B$ are entire, and
\begin{equation}\label{eq:detstrip}
 \abs{\det A(z)}\ge1-\frac{18}{25}\e^{2\pi\rho}
 \qquad(z\in\Omega_\rho).
\end{equation}
Hence all finite forward and inverse blocks are holomorphic and invertible on
$\Omega_\rho$ whenever
\begin{equation}\label{eq:rhodet}
 0<\rho<\rho_{\det}:=\frac1{2\pi}\log\frac{25}{18}.
\end{equation}

\begin{lemma}[Entire approximation preserving sewing]\label{lem:approx}
Every smooth vector-Zak section can be approximated in every finite
$C^r$ norm on $[0,1]^2$ by entire vector-Zak sections satisfying
\eqref{eq:sewing} exactly.
\end{lemma}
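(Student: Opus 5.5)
The approach is to pass to the scalar Zak picture, where sections satisfying \eqref{eq:sewing} correspond exactly to quasi-periodic scalar functions, and then to approximate by entire quasi-periodic functions through truncated Fourier expansions.

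By \eqref{eq:vectorzak}, a vector-Zak section $F=(F_1,F_2)$ satisfying \eqref{eq:sewing} is determined by the scalar function
\[
\phi(x,\nu)=2^{1/2}F_1(x,2\nu),
\]
with $F_2(x,\omega)=F_1(x,\omega+1)$. The relation $F(x,\omega+1)=SF(x,\omega)$ forces this identity and the consistency $F_1(x,\omega+2)=F_1(x,\omega)$. The first relation $F(x+1,\omega)=\e^{\pi\ii\omega}DF(x,\omega)$ then translates into the standard Zak quasi-periodicity
\[
\phi(x,\nu+1)=\phi(x,\nu),\qquad \phi(x+1,\nu)=\e^{2\pi\ii\nu}\phi(x,\nu).
\]
For the second component one uses $\e^{\pi\ii(\omega+1)}=-\e^{\pi\ii\omega}$, which matches the $-1$ entry of $D$. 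Conversely, any smooth $\phi$ with these two relations produces a sewn vector section. The correspondence is linear, and its $C^r$ norms on $[0,1]^2$ are comparable to $C^r$ norms of $\phi$ on a fixed compact set such as $[0,1]\times[0,1]$, with constants depending on $r$. It therefore suffices to approximate smooth quasi-periodic scalar $\phi$ by entire quasi-periodic ones in $C^r([0,1]^2)$.

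Next, remove the twist by a smooth multiplier. The function $\psi(x,\nu)=\e^{-2\pi\ii x\nu}\phi(x,\nu)$ is not doubly periodic, since $\psi(x,\nu+1)=\e^{-2\pi\ii x}\psi(x,\nu)$. Instead I would use the concrete structure: $\phi$ is $1$-periodic in $\nu$, so expand
\[
\phi(x,\nu)=\sum_{k\in\Z}c_k(x)\e^{2\pi\ii k\nu}.
\]
The $x$-relation becomes $c_k(x+1)=c_{k-1}(x)$, so $c_k(x)=c_0(x-k)$ and
\[
\phi(x,\nu)=\sum_k f(x-k)\e^{2\pi\ii k\nu}
\]
with $f=c_0$, i.e.\ $\phi=\cZ f$. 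Smoothness of $\phi$ means that $f$ and its derivatives decay rapidly, so $f\in\cS(\R)$. The problem thus reduces to approximating a Schwartz $f$ by $g$ for which $\cZ g$ is entire, with $\cZ g\to\cZ f$ in $C^r([0,1]^2)$.

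To build the approximants, take $g$ of the form $g=\sum_{j}a_j h_j$ with $h_j$ Hermite functions, or more simply $g=(f*\gamma_\varepsilon)\cdot \e^{-\varepsilon t^2}$ with $\gamma_\varepsilon$ a Gaussian. Convolution with a Gaussian makes $g$ extend to an entire function of $x$. Gaussian decay, which survives complex shifts with $\e^{C\abs{\Im x}^2}$ growth, makes the series $\sum_k g(x-k)\e^{2\pi\ii k\nu}$ converge locally uniformly on $\C^2$, because $\e^{-\varepsilon(x-k)^2}$ dominates $\e^{2\pi\abs{k}\abs{\Im\nu}}$. Hence $\cZ g$ is entire, and the quasi-periodicity holds identically, since it is built into the series. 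Convergence $g\to f$ in the Schwartz topology as $\varepsilon\to0$ gives $C^r$ convergence of $\cZ g$ on compacts, because $\cZ$ is continuous from $\cS$ into $C^\infty$.

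The only delicate step is confirming entire-ness: one needs $f*\gamma_\varepsilon$ to be entire with growth at most $\e^{c\abs{\Im x}^2}$, uniformly modest in $\Re x$, so that the product with $\e^{-\varepsilon x^2}$ retains real-direction Gaussian decay strong enough to beat the exponential weights in $k$. This follows from writing the convolution integral with a complex argument and bounding $\abs{\gamma_\varepsilon(x-y)}\le \e^{\abs{\Im x}^2/\varepsilon}\gamma_\varepsilon(\Re x-y)$. Alternatively, Hermite-expansion truncations give an immediate route, since the Hermite functions are polynomial times $\e^{-\pi t^2}$. Mapping back through the first step yields entire sewn vector sections approximating $F$.
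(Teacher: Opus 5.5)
Your proposal is correct and follows essentially the same route as the paper. Both arguments pass from the sewn section to a Schwartz function $f=\cZtwo^{-1}F$, approximate $f$ in $\cS(\R)$ by functions whose Zak series converge normally on compacta of $\C^2$ (the paper uses exactly your Hermite-truncation alternative), and conclude by continuity of $\cZtwo$ from $\cS(\R)$ into $C^\infty$; the only differences are that you derive the Schwartz property of $f$ by a Fourier-series argument in $\nu$ rather than citing Gr\"ochenig's Theorem 8.2.5, and you offer the mollified-and-damped approximant $(f*\gamma_\varepsilon)\e^{-\varepsilon t^2}$ as a second valid choice.
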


\begin{proof}
If $V$ is smooth and sewn, then $f=\cZtwo^{-1}V$ is Schwartz
\cite[Theorem~8.2.5]{Grochenig}.  Approximate $f$ in the Schwartz topology by
finite Hermite sums $f_N$.  Each $f_N$ is a polynomial times a Gaussian, so
its Zak series and all mixed derivatives converge normally on compact subsets
of $\C^2$; thus $\cZtwo f_N$ is entire.  The index changes proving
\eqref{eq:sewing} are exact, and continuity of
$\cZtwo:\cS(\R)\to C^\infty([0,1]^2;\C^2)$ gives the asserted convergence.
\end{proof}

\begin{proposition}[Holomorphic invariant gauge]\label{prop:holgauge}
There are $\rho_0>0$, a holomorphic line field $E^s_{\rm hol}$ on
$\Omega_{\rho_0}$, and a nowhere-zero holomorphic vector-Zak section
$v_{\rm an}$ such that
\begin{equation}\label{eq:holinv}
 E^s_{\rm hol}|_{\R^2}=E^s,\qquad
 v_{\rm an}(z)\in E^s_{\rm hol}(z),\qquad
 B(z)E^s_{\rm hol}(Tz)=E^s_{\rm hol}(z).
\end{equation}
\end{proposition}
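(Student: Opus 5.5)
We construct the holomorphic line field as the fixed point of the inverse sixteen-step projective graph transform. This time the transform acts on a space of holomorphic sections of projective space over a thin complex strip, and we use the contraction in Theorem~\ref{thm:realinput}(iii) together with a perturbation argument.

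\textbf{Step 1: holomorphic extension of the transform.} Put $B^{(16)}(z)=B(z)B(Tz)\cdots B(T^{15}z)$. By \eqref{eq:rhodet} this is holomorphic and invertible on $\Omega_\rho$ for $\rho<\rho_{\det}$. Since $T$ is a real translation, it maps $\Omega_\rho$ to itself. In the inverse direction, $E^s$ satisfies $E^s(T^{16}z)=B^{(16)}(z)^{-1}E^s(z)$. So the graph transform sends a line field $L$ to
\[
(\Gamma L)(w)=B^{(16)}(T^{-16}w)^{-1}L(T^{-16}w).
\]
We write lines in a smooth frame adapted to $E^s\oplus E^u$ and represent them by slopes.

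\textbf{Step 2: entire frame.} We cannot extend the smooth frame holomorphically directly. Instead we use Lemma~\ref{lem:approx} to approximate $v_{\rm sm}$, and a complementary sewn section, by entire vector-Zak sections $e_1,e_2$ that satisfy \eqref{eq:sewing} exactly, with $C^1$ accuracy $\varepsilon$. The sewing is exactly compatible with the covariance \eqref{eq:covariance}. Hence the slope coordinate of a line relative to $(e_1,e_2)$ is a $\Z^2$-periodic scalar function, and $\Gamma$ acts on periodic holomorphic slope functions on $\Omega_\rho$.

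\textbf{Step 3: contraction on the strip.} On $\R^2$, the tube preserved by $\Gamma$ and the contraction factor $\theta<1/50$ persist in the frame $(e_1,e_2)$ once $\varepsilon$ is small, with the factor still at most, say, $1/25$. By uniform continuity of the holomorphic matrix entries on compact sets modulo periods, we choose $\rho_0$ small. Then $\Gamma$ maps the closed set
\[
\mathcal{B}=\bigl\{\text{bounded periodic holomorphic slopes } \phi \text{ on } \Omega_{\rho_0} \text{ with } \sup\abs{\phi}\le c\bigr\}
\]
into itself and is a contraction in the sup norm, with factor below $1/10$. The Möbius action of $B^{(16)}$ on slopes varies continuously in $\Im z$, which gives this. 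Uniform limits of holomorphic functions are holomorphic, so Banach's fixed point theorem produces $\phi_{\rm hol}$. Uniqueness of the real fixed point in the tube forces $\phi_{\rm hol}|_{\R^2}$ to be the slope of $E^s$. The one-step relation $B(z)E^s_{\rm hol}(Tz)=E^s_{\rm hol}(z)$ follows because the line field $z\mapsto B(z)E^s_{\rm hol}(Tz)$ is also a fixed point of $\Gamma$, since $B$ commutes with the sixteen-step composition up to shift. By uniqueness the two coincide.

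\textbf{Step 4: the section.} Set $v_{\rm an}=e_1+\phi_{\rm hol}e_2$. It is holomorphic, and it is sewn because $\phi_{\rm hol}$ is periodic and $e_1,e_2$ are sewn. It is nowhere zero provided $e_1,e_2$ stay linearly independent on $\Omega_{\rho_0}$, which holds after shrinking $\rho_0$, since they are independent on the torus.

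\textbf{Main obstacle.} The hard part is Step 3: making quantitative that the real tube invariance and contraction survive complexification. The plan is to prove a uniform Lipschitz estimate in $\Im z$ for the slope-space Möbius maps. The generous margin $\theta<1/50$ is what absorbs both the $\varepsilon$-error from the frame approximation and the $\rho_0$-error from complexification.
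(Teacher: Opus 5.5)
Your Step 2 needs a smooth sewn section complementary to $v_{\rm sm}$ at every point of the torus, equivalently a global frame adapted to $E^s\oplus E^u$. Step 4 again uses that $e_1,e_2$ are independent everywhere. Such a pair cannot exist. Take any two continuous sections $F,G$ obeying \eqref{eq:sewing} and put $d=\det[F\;G]$. Then $d(x+1,\omega)=\det U_1(\omega)\,d(x,\omega)=-\e^{2\pi\ii\omega}d(x,\omega)$ and $d(x,\omega+1)=\det S\,d(x,\omega)=-d(x,\omega)$. Following $\arg d$ around $\partial[0,1]^2$, opposite edges cancel except for the $2\pi$ contributed by $\e^{2\pi\ii\omega}$. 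So $d$ has winding number one there and must vanish somewhere in the square. In bundle language, the vector-Zak bundle has degree $\pm1$; this is the same obstruction that forces a continuous scalar Zak transform to have a zero. Once $E^s$ is trivialized by $v_{\rm sm}$, every complementary line bundle is therefore nontrivial. Consequently your periodic scalar slope coordinate is not globally defined. The set $\mathcal B$ does not parametrize a tube about $E^s$, and nothing guarantees that $e_1+\phi_{\rm hol}e_2$ is nowhere zero.

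The missing idea is to normalize with a single sewn holomorphic \emph{covector} instead of a frame. The paper takes an entire sewn $\chi$ close to $v_{\rm sm}$ and sets $\ell(z)=\overline{\chi(\overline z)}^{\,T}$. Since $\overline{U_j(\overline z)}^{\,T}=U_j(z)^{-1}$, $\ell$ obeys the dual sewing law. Hence $\ell v$ is periodic for every sewn $v$, and $\ell\chi=\norm{\chi}^2>0$ on the real torus. Lines near $E^s$ are then represented by sewn sections $v$ with $\ell v=1$. This is an affine space modeled on sections of the nontrivial line bundle $\ker\ell$, and that bundle never has to be trivialized. The Banach contraction is run in the sup norm on this affine space, and $v_{\rm an}\ne0$ follows automatically from $\ell v_{\rm an}=1$. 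With this replacement the rest of your outline can be carried through.

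Two smaller points remain.
\begin{enumerate}[label=\textup{(\alph*)},leftmargin=2em]
\item The comparison constants between the projective slope metric and a chart norm are not controlled by $\theta<1/50$ alone. This is why the paper composes $M$ sixteen-step blocks until $(C/c)\theta^M<1/2$, rather than claiming a factor $1/25$ for one block.
\item Your uniqueness argument for one-step invariance is a legitimate alternative to the paper's identity-theorem argument. You must still verify, possibly after thinning the strip, that $z\mapsto B(z)E^s_{\rm hol}(Tz)$ lies in the same tube.
\end{enumerate}
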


\begin{proof}
Choose, by \cref{lem:approx}, an entire sewn section $\chi$ arbitrarily close
to $v_{\rm sm}$ on the real torus, and define the holomorphic covector
\begin{equation}\label{eq:ell}
 \ell(z)=\overline{\chi(\overline z)}^{\,T}.
\end{equation}
Since $\overline{U_j(\overline z)}^{T}=U_j(z)^{-1}$, it satisfies the dual
sewing law $\ell(z+e_j)=\ell(z)U_j(z)^{-1}$.  After decreasing a preliminary
strip, $\ell\chi$ is nonzero; set $v_0=\chi/(\ell\chi)$.  On the real torus,
normalize the stable vector by
$w_s=v_{\rm sm}/(\ell v_{\rm sm})$.  The choice of $\chi$ makes
$\delta:=\norm{v_0-w_s}_\infty$ arbitrarily small.

Let $d_p$ be the projective slope metric in the stable tube from
\cref{thm:realinput}.  On a fixed compact sub-tube, normalization by
$\ell v=1$ gives affine coordinates whose norm is uniformly equivalent to
$d_p$: there are $c,C>0$ such that
$c\norm{v-v'}\le d_p(\C v,\C v')\le C\norm{v-v'}$.  Choose $M$ so that
$\kappa_0=(C/c)\theta^M<1/2$ and put $N=16M$.  If
$K_N(z)=B_N(R^Nz)^{-1}$, define
\begin{equation}\label{eq:graphmap}
 \Phi_z(v)=\frac{K_N(z)v}{\ell(z)K_N(z)v},
 \qquad (\Gamma v)(z)=\Phi_z(v(R^Nz)),
\end{equation}
where the input vector lies in the affine fiber
$\ell(R^Nz)v=1$.
On the real torus, $\Gamma w_s=w_s$ and $\Gamma$ is $\kappa_0$-Lipschitz in
these affine coordinates.  Choose a radius $r>0$ inside the affine chart and
then choose $\chi$ so close that
\begin{equation}\label{eq:deltachoice}
 \delta<\frac{1-\kappa_0}{1+\kappa_0}r.
\end{equation}
For $\norm{v-v_0}_\infty\le r$ one then has
\[
 \norm{\Gamma v-v_0}_\infty
 \le\kappa_0(r+\delta)+\delta<r.
\]
Thus the real graph transform maps the closed tube strictly into itself.

The denominator in \eqref{eq:graphmap}, the strict image inclusion, and the
fiber derivative bound have positive margins on a compact real fundamental
domain times the closed fiber ball.  By \eqref{eq:rhodet} and holomorphic
continuity, the same margins persist on a sufficiently thin closed complex
tube $Q_{\rho_0}$, with contraction factor some $\kappa<1$.  Consider the
complete supremum-norm space of compatible sections on $Q_{\rho_0}$ that are
continuous up to the boundary, holomorphic inside, exactly sewn, normalized
by $\ell v=1$, and contained in the radius-$r$ tube.  Block covariance
obtained from \eqref{eq:covariance}, together with the dual sewing of $\ell$,
shows that $\Gamma$ preserves this space.  Banach's theorem gives a unique
fixed section $v_{\rm an}$; uniform convergence of the iterates makes it
holomorphic.  On the real slice uniqueness identifies its line with $E^s$.

The resulting line is $N$-step invariant.  The holomorphic line
$B(z)E^s_{\rm hol}(Tz)$ agrees with $E^s_{\rm hol}(z)$ on the real slice by
the one-step invariance in \cref{thm:realinput}.  In a local affine chart,
the difference of their coordinates is holomorphic and vanishes on
$\R^2$.  Applying the one-variable identity theorem successively in $x$ and
$\omega$, followed by analytic continuation on connected
$\Omega_{\rho_0}$, proves the one-step identity in \eqref{eq:holinv}.
\end{proof}

\begin{proposition}[Holomorphic multiplier removal]\label{prop:remove}
For some $0<\rho_2<\rho_0$ there are a nowhere-zero holomorphic vector-Zak
section $\mathcal G$ on $\Omega_{\rho_2}$ and $\lambda\in\C^*$ such that
\begin{equation}\label{eq:holeigen}
 B(z)\mathcal G(Tz)=\lambda\mathcal G(z).
\end{equation}
\end{proposition}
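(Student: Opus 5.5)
We start from the holomorphic gauge $v_{\rm an}$ of \cref{prop:holgauge}. Since $B(z)E^s_{\rm hol}(Tz)=E^s_{\rm hol}(z)$ and $v_{\rm an}$ is nowhere zero, there is a unique holomorphic function $q_{\rm an}$ on $\Omega_{\rho_0}$ with
\[
 B(z)v_{\rm an}(Tz)=q_{\rm an}(z)v_{\rm an}(z).
\]
It can be computed as $q_{\rm an}=\ell B v_{\rm an}(T\cdot)/\ell v_{\rm an}$, using the covector of \eqref{eq:ell}. The twisted covariance \eqref{eq:covariance} and the sewing of $v_{\rm an}$ show that $q_{\rm an}$ is $\Z^2$-periodic. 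It is nowhere zero because $\det B\ne0$ on the strip by \eqref{eq:rhodet}.

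On the real torus, $v_{\rm an}$ and $v_{\rm sm}$ span the same line $E^s$. Hence $v_{\rm an}=\phi\, v_{\rm sm}$ for a smooth periodic $\phi:\T^2\to\C^*$, and $q_{\rm an}|_{\R^2}=q_{\rm sm}\,\phi/\phi\circ T$. Since $T$ is a translation, $\phi$ and $\phi\circ T$ have the same winding numbers. Therefore both winding numbers of $q_{\rm an}$ on $\T^2$ vanish, by \cref{thm:realinput}(ii).

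The next step takes a holomorphic logarithm. A nonvanishing periodic holomorphic function on the complex annular strip $\Omega_{\rho_0}/\Z^2$ has winding numbers equal to those of its restriction to the real torus, since the strip retracts onto $\T^2$. So we can write $q_{\rm an}=\e^{h}$ with $h$ holomorphic and periodic on a slightly smaller strip. We expand $h(z)=\sum_{(m,n)}\hat h(m,n)\e^{2\pi\ii(mx+n\omega)}$. By holomorphy on $\Omega_{\rho_1}$, the coefficients decay like $\abs{\hat h(m,n)}\le C\e^{-2\pi\rho_1(\abs m+\abs n)}$.

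We then set $\lambda=\e^{\hat h(0,0)}$ and solve the cohomological equation
\[
 u(z)-u(Tz)=h(z)-\hat h(0,0)
\]
coefficientwise. Because $Tz=z-\tau$, this gives $\hat u(m,n)=\hat h(m,n)/(1-\e^{-2\pi\ii(m\alpha+n\beta)})$ for $(m,n)\ne0$. The Diophantine bound \eqref{eq:small} gives $\abs{\hat u(m,n)}\le 3C(1+\abs m+\abs n)^2\e^{-2\pi\rho_1(\abs m+\abs n)}$. This series converges on any strip $\Omega_{\rho_2}$ with $\rho_2<\rho_1$, losing only an arbitrarily small width. We then define $\mathcal G=\e^{-u}v_{\rm an}$ and check that
\[
 B(z)\mathcal G(Tz)=\e^{-u(Tz)}q_{\rm an}(z)v_{\rm an}(z)=\e^{h(z)-u(Tz)+u(z)-h(z)}\lambda\,\mathcal G(z)=\lambda\mathcal G(z).
\]
Since $\e^{-u}$ is periodic and nowhere zero, $\mathcal G$ is still exactly sewn and nowhere zero.

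The main obstacle is the topological step: verifying that $q_{\rm an}$ is genuinely periodic and admits a periodic logarithm on the complex strip. This requires tracking how the scalar factors $\e^{\pi\ii\omega}$ in $U_1$ and the factor $\eta$ combine under \eqref{eq:covariance}, so that the multiplier is not merely quasi-periodic. It also requires transferring the vanishing of the winding numbers from $q_{\rm sm}$ through the gauge change $\phi$. I would handle periodicity by direct substitution of the sewing laws. For the logarithm, I would note that $\Omega_{\rho}/\Z^2$ deformation retracts onto $\T^2$, so $H^1$ obstructions are detected on the real slice.
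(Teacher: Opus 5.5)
Your route is the paper's: periodicity of $q_{\rm an}$ from \eqref{eq:covariance} and exact sewing, then transfer of the vanishing winding numbers through the real-torus gauge change, which is a multiplicative coboundary. After that come a periodic logarithm and the coefficientwise solution of the cohomological equation using \eqref{eq:small}. Your retraction argument for the periodic logarithm is equivalent to the paper's. The paper takes a logarithm on the simply connected strip and notes that its constant increments under $z\mapsto z+e_j$ lie in $2\pi\ii\Z$ and are $2\pi\ii$ times the winding numbers.

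However, the final verification contains a sign error and is false as written. With $u(z)-u(Tz)=h(z)-\hat h(0,0)$ and $\mathcal G=\e^{-u}v_{\rm an}$, so that $v_{\rm an}=\e^{u}\mathcal G$, one gets
\[
 B(z)\mathcal G(Tz)=\e^{-u(Tz)}\e^{h(z)}v_{\rm an}(z)
 =\e^{h(z)+u(z)-u(Tz)}\mathcal G(z)
 =\e^{2h(z)-\hat h(0,0)}\mathcal G(z).
\]
This is not $\lambda\mathcal G(z)$ unless $h$ is constant. Your displayed chain silently uses the opposite equation $u(Tz)-u(z)=h(z)-\hat h(0,0)$.

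The fix is to take $\mathcal G=\e^{u}v_{\rm an}$, as the paper does. Then $B(z)\mathcal G(Tz)=\e^{u(Tz)-u(z)+h(z)}\mathcal G(z)=\e^{\hat h(0,0)}\mathcal G(z)=\lambda\mathcal G(z)$. Periodicity and nonvanishing of $\e^{u}$ still preserve sewing and $\mathcal G\ne0$.

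There is a second, harmless inversion. If $v_{\rm an}=\phi\,v_{\rm sm}$, then $q_{\rm an}=q_{\rm sm}\,(\phi\circ T)/\phi$, not $q_{\rm sm}\,\phi/(\phi\circ T)$. The winding conclusion is unaffected, since either ratio has zero winding numbers.

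Finally, the step you single out as the main obstacle is immediate. Since $T(z+e_j)=Tz+e_j$, substituting the sewing law into \eqref{eq:covariance} gives $q_{\rm an}(z+e_j)v_{\rm an}(z+e_j)=U_j(z)q_{\rm an}(z)v_{\rm an}(z)=q_{\rm an}(z)v_{\rm an}(z+e_j)$. The scalar factors from $U_1$ and $\eta$ are already absorbed in \eqref{eq:covariance}.
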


\begin{proof}
By \cref{prop:holgauge},
\begin{equation}\label{eq:qan}
 B(z)v_{\rm an}(Tz)=q_{\rm an}(z)v_{\rm an}(z)
\end{equation}
for a unique nowhere-zero holomorphic scalar.  Twisted covariance and exact
sewing make $q_{\rm an}$ periodic.  On the real torus,
$v_{\rm an}=a v_{\rm sm}$ for a periodic $a:\T^2\to\C^*$, whence
\begin{equation}\label{eq:coboundary}
 q_{\rm an}(z)=q_{\rm sm}(z)\frac{a(Tz)}{a(z)}.
\end{equation}
The quotient is a multiplicative coboundary, so $q_{\rm an}$ and
$q_{\rm sm}$ have the same two winding numbers; these vanish by
\cref{thm:realinput}.

On a smaller simply connected strip, write $q_{\rm an}=\e^\phi$.  Periodicity
of $q_{\rm an}$ implies that $\phi(z+e_j)-\phi(z)$ is a constant in
$2\pi\ii\Z$; zero winding makes both constants zero, so $\phi$ is periodic.
Let $\phi_0=\int_{\T^2}\phi$ and define, for $(m,n)\ne(0,0)$,
\begin{equation}\label{eq:ucoeff}
 \widehat u(m,n)=\frac{\widehat\phi(m,n)}
 {1-\e^{-2\pi\ii(m\alpha+n\beta)}},\qquad \widehat u(0,0)=0.
\end{equation}
Strip analyticity gives
$\abs{\widehat\phi(m,n)}\le C_r\e^{-2\pi r(\abs m+\abs n)}$ on every
smaller width $r$.  The polynomial loss in \eqref{eq:small} therefore leaves
normal convergence on every still smaller strip.  Hence $u$ is periodic and
holomorphic there and
$u(z)-u(Tz)=\phi(z)-\phi_0$.  Setting
$h=\e^u$, $\lambda=\e^{\phi_0}$, and
$\mathcal G=h v_{\rm an}$ gives \eqref{eq:holeigen}.
\end{proof}

\section{Gelfand--Shilov reconstruction and optimality}

\begin{lemma}[Holomorphic Zak data imply $\cS_1^1$]\label{lem:GS}
If a vector-Zak section $F$ is holomorphic on $\Omega_\rho$ for some
$\rho>0$ and $f=\cZtwo^{-1}F$, then there are $C,H>0$ such that
\begin{equation}\label{eq:factorial}
 \sup_{t\in\R}\abs{t^m f^{(n)}(t)}\le CH^{m+n}m!n!,
 \qquad m,n\ge0.
\end{equation}
\end{lemma}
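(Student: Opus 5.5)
The plan is to reduce the vector-Zak section to the scalar Zak transform and then show that $f$ and its Fourier transform $\hat f$ both extend holomorphically to a horizontal strip with exponential decay there. The standard characterization (or a direct Cauchy-estimate argument) then gives $\cS_1^1$: exponential decay $\abs{f(t)}\lesssim \e^{-a\abs t}$ together with holomorphy in a strip on which $f$ is bounded by $\e^{-a\abs{\Re t}}$.

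\textbf{Step 1: pass to the scalar Zak transform.} From \eqref{eq:vectorzak} I recover
\[
(\cZ f)(x,\nu)=\sqrt2\,F_{r+1}(x,2\nu-r).
\]
Thus $\cZ f$ is holomorphic for $\abs{\Im x}<\rho$ and $\abs{\Im\nu}<\rho/2$. It is $1$-periodic in $\nu$ up to the sewing, and quasi-periodic in $x$, namely $\cZ f(x+1,\nu)=\e^{2\pi\ii\nu}\cZ f(x,\nu)$.

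\textbf{Step 2: decay of $f$ from holomorphy in $\nu$.} The Fourier inversion
\[
f(x-k)=\int_0^1 \cZ f(x,\nu)\,\e^{-2\pi\ii k\nu}\,d\nu
\]
holds for $x\in[0,1)$. Shifting the contour to $\Im\nu=\pm\rho'$ with $\rho'<\rho/2$ is legitimate, because periodicity in $\nu$ makes the vertical sides cancel. Compactness of $[0,1]\times\{\abs{\Im\nu}\le\rho'\}$ gives a uniform bound, hence
\[
\abs{f(t)}\le C\e^{-2\pi\rho'\abs t}.
\]

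\textbf{Step 3: strip analyticity of $f$ from holomorphy in $x$.} The same formula defines $f(x-k)$ for complex $x$ with $\abs{\Im x}<\rho$, with $\Re x\in[0,1]$. Quasi-periodicity in $x$ makes these local definitions consistent. So $f$ extends holomorphically to $\abs{\Im t}<\rho'$, with $\abs{f(t)}\le C\e^{-2\pi\rho'\abs{\Re t}}$ there, after shrinking to a closed sub-strip so that the sup of $\cZ f$ is finite.

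Symmetrically, the Zak transform of $\hat f$ is $\cZ\hat f(\nu,-x)$ up to a unimodular factor $\e^{-2\pi\ii x\nu}$, which is entire. Hence $\hat f$ enjoys the same two properties.

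\textbf{Step 4: conversion to the factorial bounds.} Cauchy's estimate on discs of radius $\rho'/2$ gives
\[
\abs{f^{(n)}(t)}\le C n!\,(2/\rho')^n\e^{-c\abs t}.
\]
Using $\abs t^m\e^{-c\abs t/2}\le (2m/(ce))^m\le C'(2/c)^m m!$, I get
\[
\sup_t\abs{t^m f^{(n)}(t)}\le C H^{m+n}m!\,n!.
\]
This step does not even need $\hat f$, since exponential decay combined with uniform strip holomorphy already gives the joint estimate \eqref{eq:factorial}. So I would skip the $\hat f$ discussion and keep only Steps 1 through 4.

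\textbf{Main obstacle.} The delicate point is Step 3: making sure the contour-integral definition yields a single-valued holomorphic function on the whole strip, with decay uniform in $\Im t$. I would handle this by noting that $f(t)=\int_0^1\cZ f(t,\nu)\,d\nu$ holds directly for all real $t$, since the $k=0$ term is the only non-oscillating one. This formula obviously extends holomorphically in $t$. The decay bound then follows by combining it with the $\nu$-contour shift applied to $\cZ f(t,\nu)=\e^{2\pi\ii k\nu}\cZ f(t-k,\nu)$, with $\Re(t-k)\in[0,1]$, where $\abs{\e^{2\pi\ii k\nu}}=\e^{-2\pi k\Im\nu}$ and the sign of $\Im\nu$ is chosen according to the sign of $k$.
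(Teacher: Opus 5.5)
Your argument is correct and is essentially the paper's. Holomorphy in $\nu$ gives exponential decay through the contour shift to $\Im\nu=\pm c$ (vertical sides cancelling by $1$-periodicity), holomorphy in $x$ gives the $n!$ growth through Cauchy estimates, and $\sup_{r\ge0}r^m\e^{-dr}\le d^{-m}m!$ finishes; the only difference is the order of operations: you first build the strip extension $f(t)=\int_0^1\cZ f(t,\nu)\,d\nu$ with decay $\e^{-c\abs{\Re t}}$ and then apply Cauchy to $f$, whereas the paper applies Cauchy directly to $\partial_x^nG$ on the Zak side and reads off $f^{(n)}(x-k)$ as Fourier coefficients, which spares it the single-valuedness and continued quasi-periodicity bookkeeping you flag as the main obstacle.
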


This strip-holomorphic reconstruction principle was also isolated and proved,
with the same factorial endpoint, by Jasper and Mixon
\cite[Lemma~3]{JasperMixon2026}.  We include the proof to keep the present
argument self-contained and to record the normalization used by
\eqref{eq:vectorzak}.

\begin{proof}
Recover the scalar Zak transform by
$G(x,\nu)=\sqrt2[F(x,2\nu)]_1$.  It is holomorphic for
$\abs{\Im x}<\rho$ and $\abs{\Im\nu}<\rho/2$.  Choose $a,c>0$ with
$2a<\rho$ and $c<\rho/2$.  Cauchy's estimate on a smaller closed tube gives
\begin{equation}\label{eq:Cauchy}
 \abs{\partial_x^nG(x,\nu\pm\ii c)}\le M a^{-n}n!
 \qquad(0\le x,\nu\le1).
\end{equation}
For $x\in[0,1]$ and $k\in\Z$, Fourier inversion gives
\begin{equation}\label{eq:recover}
 f^{(n)}(x-k)=\int_0^1\partial_x^nG(x,\nu)
 \e^{-2\pi\ii k\nu}\,d\nu.
\end{equation}
Move the contour to $\Im\nu=-c$ if $k>0$ and to $\Im\nu=c$ if $k<0$; the
vertical sides cancel by periodicity.  With $d=2\pi c$,
\begin{equation}\label{eq:expdecay}
 \abs{f^{(n)}(x-k)}\le M a^{-n}n!\e^{-d\abs k}.
\end{equation}
Writing $t=x-k$ and using
$\sup_{r\ge0}r^m\e^{-dr}\le d^{-m}m!$ proves
\eqref{eq:factorial}, after enlarging the constant on the central cell.
\end{proof}

\begin{proof}[Proof of \cref{thm:main}]
Take $\mathcal G$ and $\lambda$ from \cref{prop:remove} and set
$g=\cZtwo^{-1}\mathcal G$.  The section is nowhere zero, so $g\ne0$; by
\cref{lem:GS}, $g\in\cS_1^1(\R)$.  The conjugacy
\eqref{eq:conjugacy} yields \eqref{eq:eigen}.  Weyl multiplication gives
\[
 \rho(1,0)\rho(\zeta)=\e^{-\pi\ii\beta/2}
 \rho(1+\alpha,\beta/2),\qquad
 \rho(0,1/2)\rho(\zeta)=\e^{\pi\ii\alpha/2}
 \rho\bigl(\alpha,(1+\beta)/2\bigr),
\]
which expands \eqref{eq:eigen} into \eqref{eq:dependence}.  The geometric
claims follow from \cref{lem:geometry}.
\end{proof}

\begin{proposition}[Obstruction below the endpoint]\label{prop:below}
Let $0\ne f\in\cS_s^\sigma(\R)$.  If $s<1$ or $\sigma<1$, every finite
collection of distinct Weyl shifts of $f$ is linearly independent.
\end{proposition}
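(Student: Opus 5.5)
Suppose $\sum_{j=1}^N c_j\rho(z_j)f=0$ with distinct $z_j=(x_j,\omega_j)$ and all $c_j\ne0$. The plan is to show that $f$, or its Fourier transform, extends to an entire function of controlled order. Such a dependence then forces $f=0$ by a Liouville or Phragm\'en--Lindel\"of argument. The two cases $\sigma<1$ and $s<1$ are exchanged by the Fourier transform. Indeed, $\widehat{\rho(x,\omega)f}$ is a unimodular multiple of $\rho(\omega,-x)\widehat f$, which is again a Weyl shift at a rotated point. Moreover, $\cS_s^\sigma$ is mapped onto $\cS_\sigma^s$. So it suffices to treat $\sigma<1$.

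\textbf{Step 1: analyticity.} If $\sigma<1$, the bound \eqref{eq:GSdef} with $m=0$ gives $\abs{f^{(n)}(t)}\le Ch^n n!^\sigma$. The Taylor series then converges everywhere, so $f$ extends to an entire function. Summing $\abs{f^{(n)}(t)}\abs{y}^n/n!$ gives growth
\[
\abs{f(t+\ii y)}\le C'\exp\bigl(c\abs y^{1/(1-\sigma)}\bigr),
\]
uniformly in $t\in\R$. Here $1/(1-\sigma)>1$. We also use the decay in $t$ from the $m$-bounds: combining the two sets of estimates (a standard interpolation in Gelfand--Shilov theory) gives, for $\abs y\le Y$,
\[
\abs{f(t+\ii y)}\le C_Y\e^{-\epsilon\abs t^{1/s}}.
\]
This is at least exponential decay when $s\le1$, and still subexponential decay otherwise.

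\textbf{Step 2: reduce the dependence to a functional equation.} Group the shifts by their time coordinates $x_j$. Let $x^*=\max_j x_j$. We may assume the extreme time $x^*$ carries frequencies $\omega_{j_1},\dots,\omega_{j_p}$, all distinct. The relation becomes
\[
P(t)f(t-x^*)=-\sum_{x_j<x^*}(\cdots),
\]
where $P(t)=\sum_k c'_k\e^{2\pi\ii\omega_{j_k}t}$ is a nonzero exponential polynomial. Now move along $t\in\R+\ii y$ with $y\to\pm\infty$, choosing the sign so that the term of $P$ with the largest, or smallest, frequency dominates. Then $\abs{P(t+\ii y)}\gtrsim\e^{2\pi\mu\abs y}$ away from a small exceptional set. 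Iterating the relation expresses $f$ at an extreme translate in terms of $f$ at translates, with exponential factors.

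The cleaner route is Linnell-style, via the Bargmann or Fourier side. Pass to the variable $t$ fixed and $y\to\infty$. The relation bounds $f(t+\ii y)$ by a combination of shifted values multiplied by factors $\e^{2\pi\abs{\Delta\omega}\abs y}$. Iterating $k$ times in $t$ pushes the argument out to $\abs t\sim k$, where Step 1 supplies decay $\e^{-\epsilon k^{1/s}}$. The accumulated exponential factors are only $\e^{O(k)}$.

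\textbf{Step 3: the main obstacle.} The hard part is to make the growth comparison actually force vanishing rather than merely consistency. I would try to exploit the sub-order growth $\exp(c\abs y^{1/(1-\sigma)})$ together with a Phragm\'en--Lindel\"of argument on a vertical strip. Consider the function
\[
\Psi(w)=\sum_j c_j\e^{2\pi\ii\omega_j w}f(w-x_j),
\]
which vanishes identically. Restrict it to the strip $\{x^*-1\le\Re w\le x^*\}$ and use the relation repeatedly. This should show that $f$ on a vertical line decays like $\e^{-R\abs y}$ for every $R$. Combined with the finite-order bound from Step 1, Phragm\'en--Lindel\"of in a half-strip, or a Carlson-type uniqueness theorem for functions of exponential type on lines, then gives $f\equiv0$.

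If this direct route proves too delicate, a fallback is available. A dependence with $\sigma<1$ gives a quasi-analytic, indeed entire, Zak transform $\cZ f$ in $x$. The relation is the cocycle eigen-equation \eqref{eq:conjugacy}, and the determinant of the matrix symbol is an exponential polynomial in $x$. Comparing growth of an eigen-section along $\Im x\to\infty$ would then show that the eigen-section is zero, mirroring \eqref{eq:detstrip}. In this picture the breakdown at $\sigma=1$ is precisely where the strip $\Omega_\rho$ of finite width replaces entire extension.
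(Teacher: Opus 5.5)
\textbf{There is a genuine gap.} The step that turns the dependence into $f=0$ is never carried out, and the mechanism you do sketch cannot reach the full hypothesis ``$s<1$ or $\sigma<1$''.

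Step~3 only says that iterating the relation should give decay $\e^{-R\abs y}$ on a vertical line for every $R$. Nothing in Steps~1--2 yields such a bound, so Phragm\'en--Lindel\"of or Carlson has nothing to act on. The fallback is also unavailable: the Zak conjugacy \eqref{eq:conjugacy} belongs to the specific four-point operator of \cref{thm:main}, not to an arbitrary finite set of distinct points.

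The one concrete mechanism is the iteration in Step~2, and it fails as written. Sending $y\to\pm\infty$ destroys the $t$-decay, because the constant $C_Y$ of Step~1 is only controlled by $\exp(cY^{1/(1-\sigma)})$. The repair is to fix a single line $\Im w=y_0$, with $\abs{y_0}$ so large that the extreme-frequency term dominates $P$, so that $\abs P\ge c_0>0$ on that line. Real translations preserve this line, and $k$ iterations give $\abs{f(w)}\le (KN)^kC_{y_0}\e^{-\epsilon(\Re w+k\delta)^{1/s}}$, where $\delta$ is the least positive gap $x^*-x_j$. For a general configuration $KN$ cannot be made smaller than $1$. So this bound tends to $0$ only when the decay along the line is super-exponential, i.e.\ $s<1$.

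Your analytic route therefore proves the proposition only when $s<1$ \emph{and} $\sigma<1$ hold simultaneously. When $\sigma<1\le s$, the super-exponential decay lives on $\widehat f$, which need not be analytic; for $s>1$ it may even have compact support. In that case there is no line off the real axis on which to avoid the real zeros of the extreme exponential polynomial.

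\textbf{What is missing.} You need a real-line argument that controls division by the extreme exponential polynomial using one-sided decay alone. This is exactly the one-sided theorem of Bownik and Speegle: if $\lim_{x\to+\infty}\abs{f(x)}\e^{cx\log x}=0$ for every $c>0$, then every finite set of distinct time--frequency shifts of $f$ is independent.

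The paper's proof is just this application:
\begin{itemize}
\item For $s<1$, the Gelfand--Shilov bound $\abs{f(x)}\le C\e^{-a\abs x^{1/s}}$ beats $\e^{-cx\log x}$ for every $c$.
\item For $\sigma<1$, apply the same argument to $\widehat f\in\cS_\sigma^s$, whose bound is $C\e^{-b\abs\xi^{1/\sigma}}$. Then transfer back with $\mathcal F\rho(x,\omega)\mathcal F^{-1}=\rho(\omega,-x)$, which preserves distinctness of the points.
\end{itemize}
In your reduced case $\sigma<1$, the hypothesis should therefore be used as super-exponential decay of $\widehat f$, not as entire extension of $f$.
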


\begin{proof}
The standard Gelfand--Shilov characterization gives
\begin{equation}\label{eq:GSdecay}
 \abs{f(x)}\le C\e^{-a\abs{x}^{1/s}},\qquad
 \abs{\widehat f(\xi)}\le C\e^{-b\abs{\xi}^{1/\sigma}}
\end{equation}
for suitable positive constants \cite{Petersson,Toft}.  If $s<1$, then for
every $c>0$,
$-a x^{1/s}+cx\log x\to-\infty$ as $x\to+\infty$.  Thus
\[
 \lim_{x\to+\infty}\abs{f(x)}\e^{cx\log x}=0
 \qquad\text{for every }c>0.
\]
The one-sided theorem of Bownik and Speegle gives the desired independence
\cite{BownikSpeegle}.  If $\sigma<1$, apply the same argument to
$\widehat f\in\cS_\sigma^s$ and use
$\mathcal F\rho(x,\omega)\mathcal F^{-1}=\rho(\omega,-x)$.
\end{proof}

\begin{proof}[Proof of \cref{cor:sharp}]
Systems with at most three distinct phase-space points are independent for
every nonzero $L^2$ window \cite{HRT,Heil}.  Hence \cref{thm:main} reaches
the least possible cardinality.  The exclusion when $s<1$ or $\sigma<1$ is
\cref{prop:below}.  If $s,\sigma\ge1$, then
$\cS_1^1\subseteq\cS_s^\sigma$ directly from \eqref{eq:GSdef}; the window of
\cref{thm:main} therefore supplies a counterexample throughout the closed
upper-right quadrant.
\end{proof}

\begin{remark}
No analytic approximation of the matrix symbol is made.  Such an
approximation would generally create extra Fourier modes and hence additional
time--frequency shifts.  Only an auxiliary center for the graph transform is
approximated; its fixed point is an invariant line for the original
three-term cocycle.  The four-point support is therefore preserved exactly.
\end{remark}

\section*{Acknowledgments}

The author is deeply grateful to Markus Faulhuber, Philipp Petersen,
Jordy Timo van Velthoven, and Felix Voigtlaender for the breakthrough
counterexample in \cite{FPPV2026}; its vector-Zak and cohomological
architecture made the present support compression possible.  The author also
warmly thanks John Jasper and Dustin G. Mixon for their elegant and illuminating
exponential-tail construction \cite{JasperMixon2026}, which established the
$\cS_1^1$ endpoint in a broad Diophantine setting and clarified the regularity
context in which the present support-sharp refinement belongs.  The author is
particularly grateful to Dustin G. Mixon for drawing attention to the endpoint
regularity question and thereby helping the author recognize that the
four-point construction itself admits the sharp Gelfand--Shilov refinement
proved here.

This material is based upon work supported by the National Science
Foundation under Award DMS-2205852, \emph{Collaborative Research: Topics in
Abstract, Applied, and Computational Harmonic Analysis}.

During preparation of the manuscript, ChatGPT (OpenAI) was used to assist
with proofreading, cross-reference and consistency checks, and adversarial
review of the arguments. No language-model output was used as a mathematical
premise or as a substitute for proof. Responsibility for the verification,
final claims, and submitted text remains solely with the author.

\end{document}